\numberwithin{equation}{section}
\def\CC{{\mathbb C}}  
\def\EE{{\mathbb E}} 
\def\GG{{\mathbb G}}
\def\MM{{\mathbb M}}
\def\QQ{{\mathbb Q}} 
\def\RR{{\mathbb R}}
\def\ssm{\smallsetminus}
\def\G{\Gamma}
\def\bs{\backslash}
\newcommand{\eps}{\varepsilon}
\newcommand{\p}{\partial}
\def\Acal{{\mathcal A}}
\def\Ascr{{\mathscr A}}
\def\Ecal{{\mathcal E}} 
\def\Fcal{{\mathcal F}} 
\def\Gcal{{\mathcal G}}
\def\Iscr{{\mathscr I}} 
\def\Kcal{{\mathcal K}}
\def\Lcal{{\mathcal L}}
\def\Lscr{{\mathscr L}}
\def\Mcal{{\mathcal M}}
\def\Ocal{{\mathcal O}}
\def\Pcal{{\mathcal P}}
\def\Rcal{{\mathcal R}}
\def\Ucal{{\mathcal U}}  
\def\Vcal{{\mathcal V}}
\def\Cscr{{\mathscr C}}  
\def\Dscr{{\mathscr D}}
\def\la{\langle}
\def\ra{\rangle}
\def\half{{\tfrac{1}{2}}}
\def\pt{{\scriptscriptstyle\bullet}}
\newcommand\Gr{\operatorname{Gr}}
\newcommand\Hl{\operatorname{H}}
\newcommand\Hom{\operatorname{Hom}}
\newcommand\IH{\operatorname{IH}}
\newcommand\lie{\operatorname{Lie}}
\newcommand\MF{\operatorname{MF}}
\newcommand\Star{\operatorname{Star}}
\newtheorem{theorem}{Theorem}[section]
\newtheorem{lemma}[theorem]{Lemma}
\newtheorem{proposition}[theorem]{Proposition}
\newtheorem{definition}{Definition}\numberwithin{definition}{section}
\theoremstyle{remark}
\newtheorem{observation}[theorem]{Observation}
\newtheorem{remark}[theorem]{Remark}
\title[$L_2$ Hodge structure]{ Hodge decomposition  of  $L_2$-cohomology and intersection cohomology 
of  a Shimura variety}
\author{Eduard Looijenga}
\address{Mathematics Department, University of Chicago (USA) and Mathematisch Instituut, Universiteit Utrecht (Nederland)}
\email{e.j.n.looijenga@uu.nl}
\begin{document}


\maketitle
\begin{abstract}
Classical Hodge theory  endows the square integrable  cohomology of a Shimura variety $X$ with values in 
a locally homogeneous  polarized variation of Hodge structure $\EE$ with a natural Hodge decomposition.
The theory of Morihiko Saito does the same for the $\EE$-valued  intersection  cohomology of its Baily-Borel compactification. Existing  proofs  of the Zucker conjecture identify these two, but do not claim this for their Hodge decompositions. We show that the proofs given in  \cite{looij:l2} and \cite{lr} yields that as well.
\end{abstract}

\section*{Introduction}
The Zucker conjecture states that for a Shimura variety $X$ its square integrable (=$L_2$) cohomology is naturally isomorphic with 
the intersection cohomology of its Baily-Borel compactification $X^*$ and that this is even  true if instead of constant coefficients 
we let these cohomology groups take their values in a  locally homogeneous polarized variation of Hodge structure: $\Hl^\pt_{(2)}(X,\EE)\cong \IH^\pt(X,\EE)$.
This conjecture was settled a long time ago (\cite{looij:l2}, \cite{ss}, \cite{lr}), yet one aspect remained open. 
Each side of the isomorphism comes with its Hodge decomposition: the representation of $L_2$-cohomology 
by harmonic  forms define a Hodge decomposition of $\Hl^\pt_{(2)}(X,\EE)$ and the theory of Morihiko Saito 
\cite{saito:hmodules} puts one on $\IH^\pt(X,\EE)$, but it was not established that under this isomorphism  
the two coincide.  Harris and Zucker, who  stated this as a conjecture in \cite{hz}, 5.3,  proved  that the natural map  
$\Hl^\pt_{(2)}(X,\EE)\to \Hl^\pt(X, \EE)$ is in fact a 
morphism of mixed Hodge structures (\cite{hz}, Thm.\ 5.4). The purpose of this note is to show that 
$\Hl^\pt_{(2)}(X,\EE)\cong \IH^\pt(X,\EE)$ is  indeed an isomorphism of Hodge structures.
 
The proofs of the Zucker conjecture  mentioned above took  a local form, namely as an equality in the derived category 
of bounded below complexes of $\CC$-vector spaces on $X^*$. The refinement which we prove  here  is of a similar 
nature, the derived category in question now being  one of filtered complexes that encode in a local manner the 
Hodge filtration. This implies that  the cohomology sheaves of successive quotients will be coherent $\Ocal_{X^*}$-modules, rather than being  locally constant. Our main tool are  the local Hecke operators that were  introduced in  
\cite{looij:l2} and further developed in \cite{lr} with the express goal to prove the Zucker conjecture as stated above. 
Indeed, we will show   that  a minor modification  can do the same job in a filtered setting.  
This could well have been done at the time, because the underlying technique was then already available.
This also explains why much of this paper consists of revisiting and recalling work from that  era.

Since  we  posted  the first version of this paper, Mingyu Ni posted another proof of our main result  \cite{ni}. This preprint  also contains 
several other related results of interest.

\section{Hodge modules}
\subsection{Polarizable Hodge modules} Let $X$ be a complex  manifold  and $\EE$ a polarizable variation of  Hodge structures 
on $X$ of weight $w$. We here think of $\EE$ as local system of 
finite dimensional  $\QQ$-vector spaces which contains a lattice (this guarantees that the monodromy can 
be given by integral matrices and is quasi-unipotent when restricted to a punctured disk), 
but we do not consider that lattice as part of the data. Recall that the underlying holomorphic vector bundle  
$\Ecal:=\Ocal_X\otimes_\QQ \EE$  then comes with a flag of holomorphic subbundles  $\Fcal^\pt\Ecal$ 
(the Hodge filtration) satisfying  Griffiths transversality, i.e., the property  that  the flat connection 
$\nabla=d\otimes_\QQ 1_\EE$ on $\Ecal$ takes 
$\Fcal^p\Ecal$ to $\Omega^1_X\otimes_{\Ocal_X}\Fcal^{p-1}\Ecal$. This connection extends to
a derivation of degree 1 of square  zero  in $\Omega^\pt(\Ecal)=\Omega^\pt_X\otimes_{\Ocal_M}\Ecal=\Omega^\pt_M\otimes_\QQ \EE$  and furnishes  the holomorphic De Rham resolution of $\EE_\CC:=\CC\otimes_\QQ \EE$,
\[
\EE_\CC\to (\Omega^\pt_X(\Ecal), \nabla).
\]
Note that this resolution  comes with a  (Hodge) filtration $\Fcal^\pt\Omega^\pt_X(\Ecal)$ by subcomplexes whose $p$th term is $\sum_{r+s\ge p}\Omega^r_X(\Fcal^s\Ecal)$.  

We can instead consider  $(\Ecal, \Fcal^\pt\Ecal)$ as a filtered $\Dscr_X$-module,  
where $\Dscr_X$ stands for the sheaf of holomorphic differential operators $\Ocal_X\to \Ocal_X$ (itself  filtered by order). 
It thus determines an object of a derived category of filtered $\Dscr_X$-modules: this  is the  
`Riemann-Hilbert incarnation' of  the pair $(\Fcal^\pt\Ecal, \nabla)$. 
If we ignore the filtration, then both represent $\EE_\CC$ in  $D_c^b(X,\CC)$, 
the derived category of constructible $\CC_X$-modules  with bounded cohomology. 

The  Dolbeault resolution gives  a fine resolution (meaning that the sheaves  admit partitions of unity relative to any given open cover) of the  Hodge filtered     complex  $(\Omega^\pt_X(\Ecal), \nabla)$ by the smooth bigraded de Rham complex  
\[
(\Ascr^{\pt,\pt}(\EE_\CC), d=\p +\bar\p)
\]
  with its Hodge filtration defined in an obvious manner. 
So if $U\subset X$ is open, then this puts on $\Hl^k(U; \EE)$ a filtration $F^\pt\Hl^k(U; \EE)$, with $F^p\Hl^k(U; \EE)$
the part representable by a $d$-closed section of $\Fcal^p\Ascr_X^{\pt, \pt}(\EE)$ over $U$. 
The meaning of this filtration is however usually obscure unless  $U$ is quite special.

This illustrates  the notion of a \emph{(polarized) Hodge module} in the analytic setting  in its most basic  form 
(see \cite{saito:hmodules}, \cite{saito:guide}), provided we make a degree  shift by putting $\EE$ in degree  
$-\dim_\CC X$ (this is denoted $\EE[\dim_\CC X]$). \\

Given 
a complex-analytic variety  $Z$, then a \emph{Hodge module} $M$ on $Z$ has two basic  ingredients, the first
being  a particular 
type of element $\MM$ of the derived category $D_c^b(Z,\QQ)$ of constructible $\QQ_Z$-modules with bounded constructible cohomology,  called a \emph{perverse sheaf}.  A form of the 
Riemann-Hilbert correspondence asserts that its complexification  $\MM_\CC\in D_c^b(Z,\CC)$ is representable 
by a regular holonomic  $\Dscr_Z$-module $\Mcal$. The  second consists of a filtration  on (the de Rham  resolution of) $\Mcal$ and should be regarded as a way of representing the 
Hodge filtration. This puts for any open subset $U\subset Z$, a Hodge filtration on $\Hl^k(U;\MM_\CC)$,  
but here again, its meaning is unclear in general.  Yet, as we will see below,  there is something of interest to say  
if we let $U$ run over a  neighbourhood basis  of a fixed point in $Z$. It should be clear from this sketchy  description that  the complexification $M_\CC$ of a Hodge module is entirely given by the filtered object $\Mcal$. 
We denote the category  of Hodge modules on $Z$ by  $\MF (Z,\QQ)$ and its complexification  (so as a quotient of $\MF (Z,\QQ)$) by  $\MF (Z,\CC)$. 

The notion  of a \emph{polarization} of a Hodge module  is couched in similar terms (see \cite{saito:hmodules}) 
and a Hodge module is called \emph{polarizable} if one exists. 
The polarizable Hodge modules are the  objects of  an abelian category $\MF (Z,\QQ)^p$ and its complexification by  $\MF (Z,\CC)^p$.  

The polarizable Hodge modules  have as their building blocks the  \emph{intermediate extensions} of (degree shifted) polarizable variations of 
Hodge structures. To explain, assume that we are given an irreducible subvariety $X^*\subset Z$ of complex dimension 
$m$ and a smooth subvariety $X\subset X^*$ whose complement in $X^*$  is a proper (closed) subvariety (which makes $X$ open-dense in $X^*$) and a polarized variation of 
Hodge structure $\EE$ on $X$ as before.
Writing   $j: X\subset Z$  for the inclusion, then the  intermediate extension
$j_{!*}\EE[m]$ of $\EE[m]$ is defined as  an object of $\MF(Z, \QQ)^p$. 
It gets its name from the fact that we have a natural factorization 
\[
j_!\EE[m]\to j_{!*}\EE[m]\to j_{*}\EE[m]
\]
in the category $\MF (Z,\QQ)^p$ and that it can be considered in the abelian category $\MF (Z,\QQ)^p$ as an image: it 
has the property that it has no $\MF (Z,\QQ)$-subquotients  supported by a proper subvariety of $X^*$.

Every  polarizable Hodge modules on $Z$ is isomorphic with a direct sum of polarizable Hodge modules of this type. We therefore assume 
in what follows that $Z=X^*$.

The corresponding extension of $\EE$ over $X^*$ without the shift is the  intersection complex $\Iscr\Cscr_{X^*}^\pt(\EE)$. 
One of Saito's theorems implies  that if $X^*$ is projective, then the intersection cohomology 
\[
\IH^k(X^*; \EE):=\Hl^k(X^*, \Iscr\Cscr_{X^*}^\pt(\EE))
\]
 has a natural polarizable Hodge structure of weight 
$w+k$. it also tells us that  for any locally closed subvariety $i_S: S\subset X^*$,  the cohomology  sheaves 
$R^k i^*_S\Iscr\Cscr_{X^*}^\pt(\EE))$ resp.\  $R^k i^!_S\Iscr\Cscr_{X^*}^\pt(\EE))$ are variations of mixed Hodge structure
over an smooth open-dense subset of  $S$

\subsection*{Representation by square integrable forms}
Let $\EE$ and $j:X\subset X^*$ be as above and suppose that $X$ is endowed with a K\"ahler metric. 
The polarization and the Hodge star  operator define an anti-linear map 
\[
\star: \Ascr^{p,q}_X(\EE_\CC)\to \Ascr^{m-p,m-q}_X(\EE_\CC^\vee)
\]
such that for every open $U\subset X$, the hermitian pairing 
\[
\la\;,\; \ra: (\alpha, \beta)\in \Ascr^{p,q}_X(U,\EE_\CC)\times \Ascr^{p,q}_X(U,\EE_\CC)\mapsto  
(\sqrt{-1})^{w+m} \alpha\cup\star\beta\in \Ascr^{m,m}_X(U),
\]
is positive with respect to the orientation (here the expression 
$\alpha\cup\star\beta$ combines  the cup product and the natural pairing of $\EE_\CC$ with its dual).
This defines a notion of square integrability: for
$\alpha\in \Ascr^k_X(U,\EE_\CC)$ is  \emph{square integrable} if $\la \alpha, \alpha\ra$ is integrable over $U$.  
The  cohomology of the  subcomplex of  $\Ascr^\pt(X,\EE_\CC)$  of forms that are together with their image under 
$\nabla$ square integrable, denoted 
$\Hl^\pt_{(2)}(X, \EE_\CC)$, is what is called the \emph{square integrable cohomology} of $\EE_\CC$. 
Zucker observed  in \cite{zucker:warped} that  if the K\"ahler metric on $X$ is complete and the square integrable 
cohomology of $\EE_\CC$ is finite dimensional, then the classical Hodge theory remains valid in this (possibly) noncompact setting. 
In particular, $\Hl^\pt_{(2)}(X, \EE_\CC)$ is 
harmonically represented and the  bigrading subsists and defines a Hodge structure on $\Hl^k_{(2)}(X, \EE_\CC)$ of weight $k+w$. So the spectral sequence for the Hodge filtration
\[
\Hl^q_{(2)}(X,\Gr_\Fcal^p \Omega_X^\pt(\Ecal))\Rightarrow  \Hl_{(2)}^\pt(X, \EE_\CC),
\]
where $\Gr_\Fcal^p \Omega_X^\pt(\Ecal)=\oplus_{p'+p''=p} \Omega_X^{p'}(\Gr_\Fcal^{p''}\Ecal)$,
degenerates  and endows $\Hl_{(2)}^\pt(X, \EE_\CC)$ with its Hodge filtration.

A presheaf complex on $X^*$ is defined by  assigning to an open subset $U$ of ${X^*}$ the subcomplex
of $\Ascr^\pt(U\cap X,\EE_\CC)$ consisting of forms that together with their image under $\nabla$ are square integrable.
Its sheafication gives   a subcomplex of $ j_* \Ascr_X^\pt(\EE_\CC)$ 
which we shall denote by $\Lscr^\pt_{{X^*},(2)}(\EE_\CC)$.
This complex comes filtered by the Hodge filtration:
\[
\textstyle \Fcal^p\Lscr^\pt_{{X^*},(2)}(\EE_\CC)=\Lscr^\pt_{{X^*},(2)}(\EE_\CC)\cap j_* \Fcal^p\Ascr^{\pt}_X(\EE_\CC).
\]
Our goal is to give conditions which imply  that this Hodge filtered complex is an incarnation of  
$\Iscr\Cscr_{X^*}^\pt(\EE_\CC)\in \MF(X, \CC)^p[-m]$.  The following is, in view of Zucker's  
harmonic representation,  merely a tautology. 

\begin{observation}\label{obs}
Suppose that ${X^*}$ is complex projective,  the K\"ahler metric is complete and that  $\Lscr^\pt_{{X^*},(2)}(\EE_\CC)$ 
is a complex of fine $\Ocal_{X^*}$-modules.
If the complex endowed with its Hodge filtration represents  $\Iscr\Cscr_{X^*}^\pt(\EE_\CC)$ as an element of 
$\MF(X, \CC)^p[-\dim_\CC X]$, then  the isomorphism $\Hl^k_{(2)}(X, \EE_\CC)\cong \IH^k({X^*}, \EE_\CC)$ takes the 
Hodge decomposition of $\Hl^k_{(2)}(X, \EE_\CC)$ defined by its harmonic representation  to the Hodge 
decomposition of $\IH^k({X^*}, \EE_\CC)$  defined by the Hodge module interpretation.
\end{observation}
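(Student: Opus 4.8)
The plan is to observe that, under the stated hypotheses, the two Hodge filtrations are literally the filtration induced on cohomology by one and the same filtered complex, namely $(\Lscr^\pt_{{X^*},(2)}(\EE),\Fcal^\pt)$, through its (degenerate) Hodge-to-de~Rham spectral sequence; since a filtered complex carries only one induced filtration on its abutment, the two necessarily agree.

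First I would use softness to identify the abutments. As $\Lscr^\pt_{{X^*},(2)}(\EE)$ is a complex of soft $\Ocal_{X^*}$-modules, the functor $\Gamma(X^*,-)$ computes its hypercohomology, and by the very definition of the complex its global sections over $X^*$ are the global square integrable forms $\Lscr^\pt_{(2)}(X,\EE)$. Hence $\IH^k(X^*,\EE_\CC)=\Hl^k(X^*,\Iscr\Cscr_{X^*}^\pt(\EE_\CC))=\Hl^k(\Lscr^\pt_{(2)}(X,\EE))=\Hl^k_{(2)}(X,\EE_\CC)$, and this chain of identifications is exactly the Zucker isomorphism appearing in the statement. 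Invoking the partition-of-unity remark that renders each $\Fcal^p\Lscr^\pt_{{X^*},(2)}(\EE)$ soft, filtered global sections likewise compute filtered hypercohomology, so the Hodge-to-de~Rham spectral sequence of the sheaf complex coincides with that of the complex of global $L_2$-forms.

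Next I would match each Hodge filtration to this common spectral sequence. On the $L_2$ side, Zucker's harmonic representation yields the $E_1$-degeneration recorded above, so the Hodge filtration on $\Hl^k_{(2)}(X,\EE)$ is precisely the abutment filtration of the spectral sequence of $(\Lscr^\pt_{(2)}(X,\EE),\Fcal^\pt)$. On the intersection-cohomology side, the hypothesis that the filtered complex represents $\Iscr\Cscr_{X^*}^\pt(\EE_\CC)$ in $\MF(X,\CC)[-m]$ means it is filtered-quasi-isomorphic to the filtered de~Rham complex of the intermediate extension; Saito's theorem for a pure Hodge module on a projective variety supplies the corresponding $E_1$-degeneration (strictness of $\Fcal^\pt$), so that the Hodge filtration on $\IH^k(X^*,\EE_\CC)$ is again the abutment filtration of the spectral sequence of this same filtered complex. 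A filtered quasi-isomorphism induces an isomorphism of spectral sequences from the $E_1$-page onward, and with degeneration on both sides the induced filtrations on the common abutment coincide.

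The only genuine work is bookkeeping rather than substance, and this is where the main obstacle lies: one must verify that each filtered (equivalently, each graded) piece of $\Lscr^\pt_{{X^*},(2)}(\EE)$ is soft, so that filtered global sections really do compute filtered hypercohomology, and that Saito's degeneration is invoked in the same normalization as Zucker's. Once the hypothesis places both Hodge structures on the cohomology of the identical filtered complex, nothing remains to be shown; this is why, as the parenthetical remark preceding the statement signals, the observation is a tautology.
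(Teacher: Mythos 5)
Your proposal is correct and follows essentially the same route as the paper, which offers no separate proof but treats the observation as a tautology for exactly the reason you give: once the soft, Hodge-filtered $L_2$-complex is known to represent $\Iscr\Cscr_{X^*}^\pt(\EE_\CC)$ in $\MF(X^*,\CC)[-m]$, both Hodge filtrations are the abutment filtration of the spectral sequence of one and the same filtered complex of global sections, with $E_1$-degeneration supplied by Zucker's harmonic theory on one side and Saito's theorem on the other. Your closing caveat about softness of the filtered pieces matches the paper's remark that square-integrable partitions of unity make $\Fcal^p\Lscr^\pt_{X^*,(2)}(\EE)$ fine, hence soft.
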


\begin{remark}\label{rem:concrete}
This assumption regarding $\Lscr^\pt_{{X^*},(2)}(\EE_\CC)$ is quite strong. To see what it amounts to at a given point  $s\in X^*$, 
choose an open neighborhood $U$ of $s$ in $X^*$  which is  topologically an open cone with vertex $s$ such that $\IH^k(U,\EE)$ represents the 
 local intersection cohomology $\Hl^k(i_s^*\Iscr\Cscr_{X^*})$, where $i_s: \{s\}\subset X^*$. 
 Let  $\{ U_n\}_{n=0}^\infty$ be a strictly decreasing   open neighborhood basis of $s$ in $U=U_0$  
 compatible with the cone structure, so as to 
ensure that  the  restriction maps $\IH^k(U; \EE)\to \IH^k(U_n; \EE)$  and 
$\IH^k(U\ssm \{s\}; \EE)\to \IH^k(U_n\ssm \overline U_{n+1}; \EE)$
are isomorphisms.  

If $\Lscr^\pt_{{X^*},(2)}(\EE_\CC)$ represents $\Iscr\Cscr_{X^*}^\pt(\EE_\CC)$,  then we can then identify   $\Hl^k_{(2)} (U_n, \EE_\CC)$ with 
$\IH^k(U_n; \EE_\CC)=\Hl^k(i_s^*\Iscr\Cscr_{X^*}^\pt(\EE_\CC))$ in a way that is compatible with the restriction maps.  
By the  theory of Hodge modules, $\Hl^k(i_s^*\Iscr\Cscr_{X^*}^\pt(\EE))$ comes with a mixed Hodge structure and hence its 
complexification  has  a Hodge filtration. On the other hand, each $\Hl^k_{(2)} (U_n\cap X, \EE_\CC)$  is filtered by the  Hodge filtration on 
forms described above. If we also assume, as in the Observation \ref{obs} above, that the Hodge filtration on $\Lscr^\pt_{{X^*},(2)}(\EE_\CC)$ 
represents the one defined by the theory of Hodge modules, then the image of the Hodge filtration on  
$\Hl^k_{(2)} (U_n\cap X, \EE_\CC)$ in $ \Hl^k(i_s^*\Iscr\Cscr_{X^*}^\pt(\EE_\CC))$ under  the above isomorphism 
should then converge to the one on $\Hl^k(i_s^*\Iscr\Cscr_{X^*}^\pt(\EE_\CC))$ as $n\to\infty$.  

The same limiting  property must hold for the compactly supported version:  the Hodge  filtration on 
$\Hl^k_{(2),c} (U_n\cap X, \EE_\CC)$ should converge to the one on  $\Hl^k(i_s^!\Iscr\Cscr_{X^*}^\pt(\EE_\CC)$, as $n\to \infty$. 
 \end{remark}

In the situations we will be dealing with, $X^*$ has the following property. 

\begin{definition}\label{def:mettrivial}
We say that an analytic stratification of $X^*$ which has $X$ as its  open-dense stratum is \emph{metrically locally trivial}
if it admits local  topological trivializations  which preserve strata and  are such that when restricted to the opesn stratum,  are quasi-isometries.
\end{definition}

This property ensures that the cohomology sheaves of both the derived object $\Iscr\Cscr_{X^*}^\pt(\EE_\CC)$ and the complex $\Lscr^\pt_{{X^*},(2)}(\EE_\CC)$  are locally  trivial on each stratum. In that case the  assumption in  \ref{obs} regarding the Hodge filtration is equivalent for these two properties for  to hold for every $s\in X^*$ and 
this is in fact the definition which we will use in practice. We will need  a similar  interpretation in the situation where we only know that these hypotheses are satisfied on the complement of a point.

\begin{lemma}\label{lemma:limit}  
Assume that we are given $s\in X^*$, an open  neighborhood $U$ of $s$ in $X^*$ and a continuous  proper function 
$d: U\ssm \{s\}\to (0,\infty)$  which exhibits $(U, X\cap U)$ as an open cone with vertex $s$: $d$ extends continuously to $U$ by  mapping $s$ to $\infty$ and  $d: (U\ssm \{s\}, X\cap (U\ssm \{s\}))\to (0,\infty)$ is locally trivial with the local trivializations being on
$ X\cap (U\ssm \{s\})$ a quasi-isometry. 

If the Hodge filtered complex $\Lscr^\pt_{X^*,(2)}(\EE_\CC)$ represents $\Iscr^\pt_{X^*}(\EE_\CC)$ on  $U\ssm \{s\}$, 
then  for  $0\le n<m <\infty$  we have   natural linear  isomorphisms 
\begin{multline}\label{eqn:isos}
\Hl^k_{(2)} (d^{-1}(n,m), \EE_\CC)\cong\IH^k (d^{-1}(n,m), \EE_\CC)\cong\\
\cong  \IH^k(U\ssm \{s\}; \EE_\CC)\cong\Hl^k(i_s^*j_{s*}\Iscr\Cscr_{U\ssm\{s\}}^\pt(\EE_\CC)),
\end{multline}
(where  $j_s:U\ssm \{s\}\subset U$) and  the  Hodge filtration on  the left   defines via the above isomorphisms a family of filtrations on the right with the property that  if we first let $m\to \infty$ and then let $n\to \infty$ we get the natural  Hodge filtration on $\Hl^k(i^*_sj_{s*}\Iscr\Cscr_{U\ssm\{s\}}^\pt(\EE_\CC))$ as a limit.
\end{lemma}
\begin{proof}
Recall that  $\Iscr\Cscr_{U\ssm\{s\}}^\pt(\EE_\CC)$ stands for an object in a derived category of filtered complexes. We are given that the filtered complex $\Lscr^\pt_{U\ssm\{s\},(2)}(\EE_\CC)$ represents that object. Since  the latter   is a complex of  fine sheaves, it follows that the higher direct images $R^kd_*\Lscr^\pt_{U\ssm\{s\},(2)}(\EE_\CC)$ ($k>0$) vanish, so that 
the (zeroth) sheaf direct image $d_*\Lscr^\pt_{U\ssm\{s\},(2)}(\EE_\CC)$ is also a fine complex and represents the direct image 
$d_*\Iscr\Cscr_{U\ssm\{s\}}^\pt(\EE_\CC))$ in a  derived category of  filtered complexes on $(0, \infty)$. So the cohomology of this direct image is a graded local system on $(0, \infty)$ which we can  identify with $R^\pt d_*\Iscr\Cscr_{U\ssm\{s\}}^\pt(\EE_\CC))$. 
In particular, we  get the linear isomorphisms \eqref{eqn:isos}  with the first line even as  isomorphisms of filtered vector spaces. 
It remains to observe  that the definition of a  Hodge module is such that the Hodge  filtration on $\IH^k (d^{-1}(n,m), \EE_\CC)$ has the asserted limiting property.
\end{proof}

\section{The case of a Shimura variety}

We want to apply this to the case where  $X$ is  locally symmetric variety (and hence 
quasiprojective)  and $X^*$ the  Baily-Borel  compactification of $X$.
These data  involve a reductive linear $\QQ$-group $\Gcal$. We write $G$ for the Lie group $\Gcal(\RR)$ (\footnote{As a rule, we denote an algebraic group defined over 
$\QQ$ by a calligraphic capital font and the Lie group defined by its group of real points by its roman counterpart.}), and assume that its center $Z(G)$ is compact.  So  $G/Z(G)$ is an adjoint group whose  symmetric space $D$ is that of of  $G$. We assume that $D$ comes with a $G$-invariant complex structure, which makes it a  hermitian domain. 
We further assume given  a neat arithmetic subgroup 
$\G\subset \Gcal(\QQ)$. Then $X_\G:=\G\bs D$ is a quasi-projective complex manifold. We call such an $X_\G$ a \emph{locally symmetric variety}. We often write $X$ for $X_\G$, when $\G$ is understood.  

We also assume that we are given  a finite dimensional   $\QQ$-representation $E$ of $\Gcal$. 
This gives rise to a local system $\EE=\EE_\G$ on $X$ (given as the quotient of the trivial local system $E\times D\to D$ by the diagonal action of $\G$).
It is convenient (and for what follows no loss of generality) to assume that $\EE$ is $\QQ$-irreducible, for this implies that  $\EE$ admits the structure of a polarizable variation of Hodge structure $\EE$ over $X$ of some weight $w$ with fiber $E$  (in our set-up $E$ only determines the parity of $w$; in the Shimura setting this weight is specified by an action on $E$ of a  central extension of $\Gcal$ by a $\QQ$-split copy of $\GG_m$). We shall refer to such an $\EE$ as a \emph{locally homogeneous polarizable variation of Hodge structure} on $X$.
The  Baily-Borel compactification $j:X\hookrightarrow X^*$ completes  $X$ to a normal projective variety. Its boundary $X^*\ssm X$ comes with a stratification, all of whose members are themselves have the structure of a locally symmetric variety. More details of this construction will be recalled below.
 
 The domain  $D$ comes with a complete $G$-invariant K\"ahler metric and this makes 
$X$ a locally homogeneous  K\"ahler manifold. Zucker showed that the natural stratification of $X^*$ is metrically topologically trivial in the sense of Definition \ref{def:mettrivial} and  that each of the sheaves $\Lscr^k_{{X^*},(2)}(\EE)$ is fine (\cite{zucker:warped}, Prop.\ (4.4)). A local form of his conjecture 
(proved in different ways in \cite{looij:l2}, \cite{ss}, \cite{lr}) asserts that $\Lscr^\pt_{{X^*},(2)}(\EE)$ is an incarnation of  
$\Iscr\Cscr_{X^*}^\pt(\EE_\CC)$
in $D^b_c({X^*}, \CC)$. This implies that we have  natural isomorphism 
$\Hl^k_{(2)}(X, \EE_\CC)\cong \IH^k({X^*}, \EE_\CC)$, but as mentioned in the introduction, does not imply that 
the harmonic Hodge decomposition  equals Saito's one. 
By observation \ref{obs} this requires that we establish a \emph{filtered} local 
version of the Zucker conjecture in the sense that $\Lscr^\pt_{{X^*},(2)}(\EE)$ with its Hodge filtration is a 
cohomological Hodge complex and represents $\Iscr\Cscr^\pt_{X^*}(\EE)$ in $\MF(X^*,\CC)[-\dim_\CC X]$. The main result of this paper  says that this is the case.

\begin{theorem}\label{thm:main}
The complex $\Lscr^\pt_{{X^*},(2)}(\EE)$ is a fine $\Ocal_{X^*}$-module and  the Hodge filtered complex $\Lscr^\pt_{{X^*},(2)}(\EE)$ represents  $\Iscr\Cscr_{X^*}^\pt(\EE_\CC)$ as a degree shifted complex complexified  polarized Hodge module. So  for every open subset $U\subset X^*$ we have an isomorphism $\Hl_{(2)}^k(U\cap X, \EE)\cong \IH^k(X, \EE)$ of filtered vector spaces, which is functorial in $U$.
\end{theorem}

\subsection{Hecke equivariance} Before we proceed to give the proof, we observe the functorial behaviour in  $\G$.  A subgroup $\G'\subset \G$ of finite index determines  a natural  map $F: X_{\G'} \to X_\G$. Since $\G$ is neat, this map is a finite unramified covering  and $\EE_{\G'}$ can be identified with $F^*\EE_\G$. In particular 
there is a natural map $F_*\EE_{\G'}=F_! F^*\EE_{\G}\to \EE_{\G}$. The covering $F$ extends uniquely  to a finite projective morphism $F: X^*_{\G'} \to X^*_\G$ and so we get natural morphisms
\[
F^*\Iscr\Cscr^\pt_{X_\G^*}(\EE_\G)\to \Iscr\Cscr^\pt_{X_{\G'}^*}(\EE_{\G'}) \quad \text{and}\quad 
F_!\Iscr\Cscr^\pt_{X_{\G'}^*}(\EE_{\G'})\to \Iscr\Cscr^\pt_{X_\G^*}(\EE_\G).
\]
The first arrow is an isomorphism and the composite  map
\[
\Iscr\Cscr^\pt_{X_\G^*}(\EE_\G)\to F_*F^*\Iscr\Cscr^\pt_{X_\G^*}(\EE_\G)=F_!F^*\Iscr\Cscr^\pt_{X_\G^*}(\EE_\G)\to \Iscr\Cscr^\pt_{X_\G^*}(\EE_\G)
\]
is simply multiplication by the degree of the covering. After a degree shift, this  little diagram is  one of polarized Hodge modules.
The covering $F$ is also a local isometry and hence $\Lscr^\pt_{{X_\G^*},(2)}(\EE_\G)$ and
$\Lscr^\pt_{{X_{\G'}^*},(2)}(\EE_{\G'})$ possess the same  properties  and induce maps between the associated  spaces of harmonic forms. In particular, the validity of  Theorem \ref{thm:main} for $\G'$ implies its validity  for $\G$. Indeed, that  theorem  can be phrased 
(and is probably   best understood) in  adelic language. 

This also makes it clear 
that  we have at our disposal an action of the Hecke algebra on both  $\Iscr\Cscr_{X^*}^\pt(\EE_\CC)$ and  $\Lscr^\pt_{{X^*},(2)}(\EE)$.   Concretely, given $g\in \Gcal(\QQ)$, then $g\G g^{-1}\cap \G$ is of finite index in 
$\G$ and $g^{-1}\G g$,  and hence gives rise to a finite (Hecke) correspondence
\[
T_g: X_\G\xleftarrow{F} X_{\G\cap (g^{-1}\G g)} \stackrel{g.}{\cong}X_{(g\G g^{-1})\cap \G}\xrightarrow{F'} X_\G
\]
where $F$  and $F'$ are  the natural maps and  the middle isomorphism is induced by the action of $g$ on $D$. This  gives 
us endomorphisms $T_g^*$  of the polarized Hodge module $\Iscr\Cscr_{X^*}^\pt(\EE_\CC)$ (where the polarization gets multiplied by 
 the degree of this correspondence) and  the  complex $\Lscr^\pt_{{X^*},(2)}(\EE)$ with the latter inducing an  endomorphism of the space  harmonic forms. 
 This is relevant here, because this leads us  to impose natural boundary  conditions (namely equivariance with 
 respect to local Hecke operators) on the complex $\Lscr^\pt_{{X^*},(2)}(\EE)$  which respect the Hodge filtration and which are inherited by the harmonic forms.  These boundary conditions  were key in our original proof of the Zucker conjecture \cite{looij:l2}.

\subsection{A brief review of the Baily-Borel compactification}
What follows can be found in \cite{amrt}, but is presented here in a somewhat  more geometric  spirit, as for example  in \cite{chen_looij}.

We recall that $X^*$ appears as an orbit space of the Baily-Borel-Satake extension $D^*$ of the symmetric domain $D$.
That extension is a disjoint union of hermitian domains endowed with the so-called horocyclic topology. It contains $D$ as an open subset and $D^*\ssm D$ consists of the \emph{rational boundary components} of $D$. The closure of each rational boundary component in  $D^*$ yields its own Baily-Borel-Satake extension. The action of $G$ on $D$ does not extend to $D^*$, but the action of $\Gcal(\QQ)$  does.

The rational boundary components are in bijective correspondence with the maximal proper $\QQ$-parabolic subgroups of 
$\Gcal$. We first recall  how this comes about. 
 
Let  $\Pcal\subsetneq \Gcal$ be a maximal proper $\QQ$-parabolic subgroup. The Lie group $P$ underlying $\Pcal(\RR)$ acts transitively on $D$.  The unipotent radical $\Rcal_u(\Pcal)$ is nontrivial, but at most 2-step nilpotent:
the  center $\Ucal_\Pcal$ of $\Rcal_u(\Pcal)$ is nontrivial and $\Vcal_\Pcal:=\Rcal_u(\Pcal)/\Ucal_\Pcal$ is abelian. So these are vector groups and the commutator map produces a surjective map $\wedge_\RR^2\lie (V_\Pcal)\to \lie (U_\Pcal)$.
The reductive (Levi) quotient $\Lcal_\Pcal:=\Pcal/\Rcal_u(\Pcal)$ acts on $\lie (V_\Pcal)$ and $\lie (U_\Pcal)$ and the commutator map is equivariant for these actions.   

The center of $\Lcal_\Pcal$ contains a distinguished  $\QQ$-split copy 
$\Acal_\Pcal$ of $\GG_m$. This copy comes  with a (character) isomorphism $\chi_\Pcal: \Acal_\Pcal\cong \GG_m$  such that its  action on 
$\lie(\Ucal_\Pcal)$ resp.\  $\lie(\Vcal_\Pcal)$ is with  the character $\chi_\Pcal^2$ resp.\ $\chi_\Pcal$
We denote the  inverse of  $\chi_\Pcal$ (when regarded as a morphism) by $\alpha_\Pcal:\GG_m\cong  \Acal_\Pcal\subset \Lcal_\Pcal$.

The adjoint action  $\Lcal_\Pcal$ on $\Ucal_\Pcal$ leads to an almost  decomposition  (in the sense that intersections of factors are finite and central) 
\[
\Lcal_\Pcal=\Mcal^\ell_\Pcal \Acal_\Pcal\Mcal^h_\Pcal,
\]
 where $\Mcal^h_\Pcal$ acts trivially on $\Ucal_\Pcal$, whereas  $\Mcal^\ell_\Pcal $ acts  on $\Ucal_\Pcal$ with finite kernel. 
 The  Lie group $M^h_\Pcal=\Mcal^h_\Pcal(\RR)$ has compact center and its symmetric space is in a natural manner a hermitian domain $D_\Pcal$ (the complex structure is preserved by $M^h_\Pcal$): this is the  boundary component associated with $\Pcal$ as mentioned above.  
The complex manifold   $D_\Pcal$  is also charactrized by the property that there is  a natural $P$-equivariant holomorphic  surjection 
 \[
 \pi_\Pcal: D\to D_\Pcal
 \]
whose fibers are  the $M^\ell_\Pcal A_\Pcal^\circ$-orbits, where  $A_\Pcal^\circ$ is the identity component of $A_\Pcal=\Acal_\Pcal(\RR)$ (so  $A_\Pcal^\circ$ is isomorphic with the Lie group $\RR_{>0}$, the positive reals regarded as a multiplicative group). The group $A_\Pcal^\circ$  is also responsible for  the  geodesic flow in the fibers of $\pi_\Pcal$ towards $D_\Pcal$: for every $z\in D$, there is a unique Lie lift $\alpha_{\Pcal,z}: \RR_{>0}\to P$ of $\alpha_\Pcal:\RR_{>0}\cong A_\Pcal^\circ$ whose orbit through $z$ is a geodesic which   has  $\pi_\Pcal(z)$ as its limit at $+\infty$.  

A central role in the definition of the horocyclic topology is played by a nonempty strictly  convex self-dual cone $C_\Pcal$
in the Lie algebra $\lie(U_\Pcal)$. This cone is homogeneous: it is an $M^\ell_\Pcal$-orbit. There is a natural $P$-equivariant projection $\rho_\Pcal: D\to C_\Pcal$.  We combined  this with $\pi_\Pcal$ to get 
 a $P$-equivariant map 
\begin{equation}\label{eqn:nilfactor0}
(\rho_\Pcal,\pi_\Pcal): D\to C_\Pcal\times D_\Pcal.
\end{equation}
As mentioned, $P$ acts transitively on $D$ and it is clear that it acts on $C_\Pcal\times D_\Pcal$ via $L_\Pcal$. In fact,
the map \eqref{eqn:nilfactor0} is a principal $R_u(\Ucal_\Pcal)$ bundle. This also shows  that the complex  fiber dimension of $\pi_\Pcal: D\to D_\Pcal$ is $\dim\lie (U_\Pcal)+\half\dim\lie (V_\Pcal)$ (and that  $\dim\lie (V_\Pcal)$ must be even).

 Let $C^+_\Pcal$ stand for  the convex hull of the rational points of  the  closure  of $C_\Pcal$ in $\lie(U_\Pcal)$ (or equivalently, in  $\lie(G)$, because $\lie(U_\Pcal)\subset \lie(G)$ is subspace defined over $\QQ$). 
 This is a convex cone whose  faces are also convex cones (and open in their linear span). Any such face not equal to the vertex 
 $\{0\}$ is of the form $C_{\Pcal'}$ for some maximal proper rational parabolic group $\Pcal'$.   
We shall write  $D_\Pcal\le D_{\Pcal'}$  if $C_{\Pcal'}$ is a face of $C^+_\Pcal$ (written as $C_{\Pcal'}\le C_{\Pcal}$; it is therefore reasonable to  think 
 of the vertex $\{0\}$ as the face  associated to $\Gcal$ or to $D$).
 Let us note here that since the groups $A^\circ_\Pcal$ and $A^\circ_{\Pcal'}$ both act as scalar multiplication in $U_{\Pcal'}$, they can be naturally identified.

So the union  of $D$ and the rational boundary components  $\ge D_\Pcal$ is a subset $\Star(D_\Pcal)$ of $D^*$  whose constituents are indexed by the faces of  $C^+_\Pcal$. The projection $(\rho_\Pcal,\pi_\Pcal): D\to D_\Pcal$ extends to $(\rho_\Pcal,\pi_\Pcal): \Star(D_\Pcal)\to C^+_\Pcal\times D_\Pcal$. 
\\

The subgroup  $\G_\Pcal$ of   $\Pcal$ is arithmetic (and hence a lattice in $P$) and acts on $D_\Pcal$ 
via an arithmetic  quotient group whose  orbit space is a stratum  $S$ of $X^*$. 
We denote the kernel of this $\G_\Pcal$-action  by $\G_\Pcal^\ell\subset \G_\Pcal$.

We recall from \cite{amrt}  that a \emph{core} in $C_\Pcal$  is a 
subset  $K\subset C_\Pcal$ which is invariant under the translations in $C_{\Pcal^\ell}$ and  for which there exist $p_0, p_1\in C_\Pcal$  such that $\G_{\Pcal}^\ell (p_0+C_\Pcal)\subset K\subset 
\G_{\Pcal}^\ell (p_0+C_\Pcal)$. This notion  does not depend on $\G$  and  the convex hull of $K$ is then also a \emph{core} (see also \cite{looij:cone}).
In this paper, we shall use this notion in a more restricted sense:  we call  a core $K\subset C_\Pcal$ a \emph{$\G_\Pcal^\ell$-core} if in addition  $K$ is a  
$\G_\Pcal^\ell$-invariant, open, convex and has  smooth boundary. It is clear that $\lambda K=\alpha_\Pcal(\sqrt{\lambda})K$ is then also a core for every $\lambda >0$. 

A relative version is defined in  an obvious manner: a $\G_\Pcal$-\emph{core}  is an open $\G_\Pcal$-invariant subset $\Kcal$ of $C_\Pcal\times D_\Pcal$ with smooth boundary which meets each  fiber over $D_\Pcal$ in a $\G_\Pcal^\ell$-core.  So $\Omega_\Kcal:=(\rho_\Pcal,\pi_\Pcal)^{-1}\Kcal$ is then an open subset of $D$ which is  invariant under  both the action of the nilpotent Lie group $R_u(\Pcal)$ and the geodesic flow towards $D_\Pcal$. We extend it to a subset $\hat\Omega_\Kcal$ of $\Star(D_\Pcal)$ by stipulating that 
$\hat\Omega_\Kcal\cap D_{\Pcal'}=\pi_{\Pcal'}(\Omega_\Kcal)$ when $D_\Pcal'\ge D_\Pcal$. Then  $\hat\Omega_\Kcal$   is also invariant  under $R_u(\Pcal)$ and the geodesic flow towards  $D_\Pcal$.  

It is a fundamental  fact (and in a sense the  reason for introducing cores in the first place) that there exists  a $\G_\Pcal$-core $\Kcal$ such that every  $\G_\Pcal$-orbit in  $\Omega_\Pcal$ 
is the intersection of $\hat\Omega_\Pcal$ with a   $\G$-orbit. In other words, the orbit set $\hat U_S=\G_\Pcal\bs\Omega $ 
can then be regarded as a subset of $X^*$. This  leads us to the \emph{horocyclic topology} on $D^*$: 
it  is  the coarsest topology which  for every $\Pcal$ makes the subsets $\{\hat\Omega_\Kcal\}_\Kcal$ a basis of neighborhoods 
of $D_\Pcal$ in $D^*$ and the  projection $\pi_\Pcal: \Star(D_\Pcal)\to D_\Pcal$ continuous. The justification is that if we  endow 
$X^*$ with the quotient topology, it becomes a locally compact Hausdorff space and that the sheaf on $X^*$ of continuous 
$\CC$-valued functions which are analytic on each stratum endows $X^*$ with the structure of a normal analytic variety

\begin{remark}
The  space $C^+_\Pcal$ also admits a horocyclic  topology (this uses the self-dual properties of its faces, see \cite{looij:cone}); it is 
$\Pcal^\ell(\QQ)$-equivariant 
and makes  $\G_\Pcal^\ell\bs C^+_\Pcal$ a stratified locally compact  Hausdorff space. There is a natural extension of $\rho_\Pcal$ to 
a continuous map $\Star(D_\Pcal)\to C^+_\Pcal$ inducing a map $\G_\Pcal\bs \Star(D_\Pcal)\to \G_\Pcal^\ell\bs C^+_\Pcal$ over $S$ of stratified spaces. We will however not need this in what follows.
\end{remark}

\begin{definition}\label{def:}
We say that a  neighborhood of $D_\Pcal$ in $\Star (D_\Pcal)$  is \emph{$\G$-regular} if is  of the form  $\hat\Omega_\Kcal$
with $\Kcal$  a  $\G_\Pcal$-core and is such that for some $\eps\in (0,1)$ a nonempty  intersection of $\hat\Omega_{(1-\eps)\Kcal}$ with a 
 $\G$-orbit is in fact a $\G_\Pcal$-orbit. 

We call a   neighbourhood of $S$ in $X^*$ which appears as the image of a $\G$-regular neighbourhood of $D_\Pcal$ in $\Star (D_\Pcal)$ a \emph{fundamental regular neighbourhood} of $S$ in $X^*$. 
\end{definition}

A fundamental regular neighbourhood $\hat U_S=\G_\Pcal^\ell\bs\hat\Omega_\Kcal$ of  $S$ in $X^*$ has several nice properties. First of all,  it  is invariant under a geodesic flow  towards $S$. To be precise,  it takes $\hat U_S$ to $\mu\hat U_S:=\G_\Pcal\bs \Omega_{\mu\Kcal}$ for every $\mu\ge 1$. This 
defines a homeomorphism $(0,\infty)\times \p\hat U_S\cong \hat U_S\ssm S$ which extends to a stratified homeomorphism of $\hat U_S$ onto the open mapping cone of $\pi_S|\p\hat U_S$ (which is here defined as the quotient of   $(0, \infty]\times  \p\hat U_S$ obtained by collapsing  $\{\infty\}\times  \p\hat U_S\cong \p\hat U_S$ along $\pi_S$). Secondly, 
the projection $\pi_S:\p\hat U_S\to S$ is topologically locally trivial in a strata preserving manner. And thirdly, 
$\pi_S: U_S:=\G_\Pcal\bs\Omega_\Kcal\to S$  factors  over a fibration  into compact  nilmanifolds
\begin{equation}\label{eqn:nilfactor}
 U_S\xrightarrow{\rho_S} \G_{\Lcal_\Pcal}\bs\Kcal\xrightarrow{\pi'_S} S,
\end{equation}
where $\G_{\Lcal_\Pcal}$ is the image of $\G_\Pcal$ in $L_\Pcal$. The fiber of $\rho_S$  through any  point of $ U_S$ is naturally 
identified with the nilmanifold $R_u(\Pcal)/\G_{R_u(\Pcal)}$ (this is a torus bundle over a torus; in particular, $\rho_S$ is proper). The fiber 
of $\pi'_S$ over $s\in S$ is the $\G_\Pcal^\ell$-orbit space of  the  core $\Kcal_s$. 
There exist metrically locally trivial trivializations of $\hat U_S\to S$ in the sense of Definition \ref{def:mettrivial} compatible with the 
geodesic flow towards $S$.

\subsection{Proof of the Theorem \ref{thm:main}}
We begin with reviewing the proof of the Zucker conjecture as given in \cite{looij:l2} and \cite{lr}.
Every $g\in \Pcal^\ell(\QQ)$ induces  Hecke correspondence $T^\Pcal_g$ of the germ of $X^*$ at $S$, which is proportional to the identity over $S$. It is a local isometry which lifts to $\EE$. So if  $i_S: S\subset  X^*$ is the inclusion, then $T^\Pcal_g$
induces an endomorphism $T^\Pcal_g{}^*$ of the mixed Hodge module $i_S^*j_{!*}\EE$, the intersection complex $ i_S^*\Iscr\Cscr_{\Omega}^\pt(\EE)$ and  $i_S^*\Lscr^\pt_{\Omega \ssm S,(2)}(\EE)$.

In case  $g$ is such that  $g\G_\Pcal g^{-1}\subset\G_\Pcal$, then it is the identity over $S$ and we can choose the $\G$-core $\Kcal$ such that  $g$ maps a $\Kcal$ to itself and hence defines in fact a finite map from a fundamental regular neighborhood $\hat U_S$ as above onto neighborhood of $S$ contained in $\hat U_S$. 

For  example, if we take $g\in \Rcal_u(\Pcal)(\QQ)$, then $T^\Pcal_g$  preserves each  fiber of $\rho_S$. Insofar the action 
on  differential forms  is concerned, we may take these forms in such a manner that their pull-back to 
$D$ is invariant under the nilpotent  Lie group $R_u(\Pcal)$ (the harmonic forms will automatically have this property). 
The  Hecke action of $T^\Pcal_g{}^*$ on such forms is trivial. This explains  why  for most of its uses, $T^\Pcal_g{}^*$ 
only depends on the image $\bar g$ of $g$ in the Levi quotient. 

Of special interest here is the case when 
$\bar g\in \Acal_\Pcal(\QQ)$.  If $\chi_\Pcal(\bar g)=\sqrt{q}$ (or equivalently, $\alpha_\Pcal(\sqrt{q})=\bar g$), then $g\G_\Pcal g^{-1}\subset\G_\Pcal$, provided $q$ is a  positive  square  such that  $q-1$ is sufficiently divisible.  We assume this is the case and then  write $T^S(q)$ for $T^\Pcal_g$. The  action of $T^S(q)$  on  $ U_S$ is geometrically easy to understand in terms of the 
factorization \eqref{eqn:nilfactor}: since $g$ acts as multiplication by $q$ resp.\  $\sqrt{q}$ on $\lie(U_\Pcal)$ resp.\  $\lie(V_\Pcal)$, the map $T^S(q)$
takes $[\G_\Pcal^\ell y]\in \G_\Pcal^\ell\bs\Kcal$ to $[\G_\Pcal^\ell qy]\in \G_\Pcal^\ell\bs\Kcal$ with the map on their nilmanifold fibers being an  isogeny: recall that  the nilmanifold is a torus bundle and that over a torus and $T^S(q)$ acts in the base torus as 
multiplication by $\sqrt{q}$ and in the fiber torus as  multiplication by $q$. So  $T^S(q)$ induces on the germ of $X^*$ at  $S$ a 
morphism of degree 
\[
q^{\dim\lie(U_\Pcal)} (\!\sqrt{q})^{\dim\lie(V_\Pcal)}=q^{\dim\lie(U_\Pcal)+\half\dim\lie(V_\Pcal)}=q^d,
\]
where $d$ is  the  complex codimension of $S$ in $X^*$. The  geodesic flow \emph{away} from $S$ produces an isotopy of $T^S(q)$ with  a self map of $\hat U_S$ which is the identity over $\G_{\Lcal_\Pcal}\bs\Kcal$. This self map then induces in each
in each fiber over $\G_{\Lcal_\Pcal}\bs\Kcal$ an isogeny as above.  Both the singular  cochain complex  and the $\EE$-valued de Rham complex of  the  fiber through a given point are naturally quasi-isomorphic  with the  Lie algebra  subcomplex  
\begin{equation}\label{eqn:}
\Hom_\RR( \wedge^\pt\lie(R_u(\Pcal), E).
\end{equation}
The action of $T^S(q)$  is in this subcomplex  given by the lift $g\in \Pcal(\QQ)$ of $\bar g=\alpha_\Pcal(\sqrt{q})$ and hence is semisimple with eigenvalues  powers of $\sqrt{q}$. This is of course merely the evaluation of the action of a one-parameter subgroup 
on this Lie algebra complex. In this form  it already appeared earlier in the work of Borel  \cite{bw} and  Zucker \cite{zucker:l2_ii}. 
It shows that the total direct image of $\EE|U_S$ on $\G_{\Lcal_\Pcal}\bs \Kcal$ decomposes in the derived category 
according to the integral powers of $\sqrt{q}$. This implies the semisimplicity of $T^S(q)$-actions on various cohomology groups. 

Let $s\in S$, denote by $U_s\subset \hat U_s$  the fiber of  $U_S\subset \hat U_S$ over $s$ and put $\EE_s:=\EE|U_s$. By the  defining properties of the intersection complex,
\[
\IH^k(\hat U_s, \EE_s )=
\begin{cases}
\IH^k(\hat U_s\ssm \{s\}, \EE_s)&\text{if $k<d$},\\
0 & \text{if $k\ge d$.}
\end{cases}
\]
\[
\IH_c^k(\hat U_s, \EE_s)=
\begin{cases}
\IH_c^{k}(\hat U_s\ssm \{s\}, \EE_s)&\text{if $k>d$},\\
0 & \text{if $k\le d$}
\end{cases}
\]
and the  polarization pairing  on $\EE_s$ induces the Poincar\'e duality pairing
\[
\IH^k(\hat U_s\ssm\{s\}, \EE_s)\times \IH_c^{2d-k}(\hat U_s\ssm \{s\}, \EE_s)\to \IH_c^{2d}(\hat U_s\ssm \{s\}; \QQ) (\cong \QQ).
\] 
The pair $(\hat U_s, \EE_s)$ comes with an endomorphism $T^s(q)$ of degree $q^d$ which acts semisimply on $\IH^\pt(\hat U_s\ssm\{s\},\EE_s)$ with eigenvalues powers of  $\sqrt{q}$.  The pairing  is $T^s(q)$-equivariant, where $T^s(q)$  acts on $\IH^{2d}(\hat U_s\ssm \{s\}; \QQ)$  as multiplication by $q^d$. 
We  restate this  in  terms of (a derived category of) sheaf complexes: letting $j_S$ stand for the inclusion $\hat U_S\ssm S\subset \hat U_S$, then
\begin{equation}\label{eqn:dec}
i_S^*j_{S*}\Iscr\Cscr_{\hat U_S\ssm S}^\pt(\EE)= i_S^*\Iscr\Cscr_{\hat U_S}^\pt(\EE)\oplus  i_S^!\Iscr\Cscr_{\hat U_S}^\pt(\EE)[-1],
\end{equation}
with the first summand living in degrees $<d$ and the second in degrees $>d$. This comes a perfect duality pairing (in a derived category)
\[
i_S^*j_{S*}\Iscr\Cscr_{\hat U_S\ssm S}^\pt(\EE)\otimes_{\QQ_S}  i_S^!j_{S*}\Iscr\Cscr_{\hat U_S\ssm S}^\pt(\EE)\to \QQ_S(-d)[-2d].
\]

The following is proved in \cite{looij:l2} (Prop.\  3.8) (see also \cite{lr}, Thm.\  3.2). The semisimplicity of the  action of  $T^S(q)$ on $R^\pt i_S^*j_{S*}\Iscr\Cscr_{\hat U_S\ssm S}^\pt(\EE)$  is not stated there explicitly, but follows from the preceding discussion.

\begin{proposition}\label{prop:Phi_IC}
The action of the local Hecke operator $T^S(q)$  on  the local system $R^\pt i_S^*j_{S*}\Iscr\Cscr_{\hat U_S\ssm S}^\pt(\EE)$ is semisimple  with eigenvalues that are integral powers of $q$ and the summand associated with the eigenvalue $q^l$  is a locally homogeneous polarized Hodge structure on $S$ of weight $w+\ell$. 

Furthermore,  $R^\pt i^*_S\Iscr\Cscr^\pt_{\hat U_S}(\EE)$ resp.\ 
$R^\pt i_S^!\Iscr\Cscr_{\hat U_S}^\pt(\EE)[-1]$ is  the subsum of the summands with eigenvalue  $<q^d$ resp. $>q^d$
(so the  eigenvalue $q^d$ does not occur).
\hfill $\square$
\end{proposition}

The proof of  our main theorem will also use the following simple linear algebra observation, stated here as a lemma.

\begin{lemma}\label{lemma:grassmannlimit}
Let  $T$ be  semisimple transformation of a finite dimensional $\CC$-vector space $V$ whose eigenvalues are integral powers of $q$. Then for every subspace  $F\subset V$  the limit $F_\infty:=\lim_{n\to \infty} T^nF$ (taken in the Grassmannian of $V$) is of the form
$\oplus_\ell F_{\infty, \ell}$ with $F_{\infty, \ell}$ is contained in the $q^\ell$-eigenspace $V_\ell$ of $T$ and obtained  
as  the image of  $F\cap \oplus_{\ell'\le \ell} V_\ell'$ under the projection onto $V_\ell$.\hfill $\square$
\end{lemma}

\begin{proof}[Proof of Theorem \ref{thm:main}]
Let us make the inductive assumption that  it has been established that the Hodge filtered complex $\Lscr^\pt_{X^*(2)}(\EE)$ represents $\Iscr\Cscr^\pt(\EE)$ on the union of strata of complex codimension $<d$.  
The induction step consists in proving that this is then also true along  a codimension $d$ stratum $S$ as above. 

Our induction assumption implies that the complex  $i^*_Sj_{S*}\Lscr^\pt_{\hat U_S\ssm S,(2)}(\EE)$ represents  $i^*_Sj_{S*}\Iscr\Cscr_{\hat U_S\ssm S}^\pt(\EE)$.  As mentioned, Zucker proved that  $\pi_S:\hat U_S\to S$ admits metric local trivializations in the sense of Definition \ref{def:mettrivial} that are 
compatible with the geodesic flow towards $S$ (the Theorem in \S 3 of \cite{zucker:l2_ii}). So  
 $\Lscr^\pt_{X^*,(2)}(\EE)$ is locally constant along $S$ and the  cohomology  of the stalk of 
$i^*_Sj_{S*}\Lscr^\pt_{\hat U_S\ssm S,(2)}(\EE)$ at $s\in S$  is also that  of its form restriction to the fiber over $s$, i.e., the stalk of 
$R^kj_{s*}\Lscr^\pt_{\hat U_s\ssm \{s\},(2)}(\EE_s)$  at $s\in S$ (where $j_s:\hat U_s\ssm\{s\}\subset \hat U_s$). We are then in a situation to which Lemma \ref{lemma:limit}  applies: it tells us that 
for  $1\le \lambda<\mu$ we have natural isomorphism  of vector spaces
\begin{multline*}
\Hl_{(2)}^\pt(\lambda \hat U_s\ssm \mu\hat U_s, \EE_s )\cong \IH^\pt(\lambda \hat U_s\ssm \mu\hat U_s, \EE_s)\cong \\\cong\IH^\pt(\hat U_s\ssm \{s\}, \EE_s)\cong \Hl^k(i_s^*j_{s*}\Iscr\Cscr^\pt_{\hat U_s\ssm \{s\}}(\EE))
\end{multline*}
with the first row an isomorphism of filtered vector spaces.
We  get the Hodge filtration on the last term  from  (the image of) the Hodge filtration of the first term by first taking the 
limit  for $\mu\to \infty$ and then the limit  for $\lambda\to \infty$.  
The endomorphism  $T^s(q)$ acts on the  last term (and preserves its Hodge filtration). 
If we  transfer this action to the top row, we  can make it geometrically explicit 
by noting that for   $1\le \lambda <\mu$,  the
 restriction map
\begin{equation}\label{eqn:restr}
\Hl_{(2)}^k(\lambda\Omega_{s}\ssm \mu\Omega_{s}, \EE_s )\to \Hl_{(2)}^k(\lambda q\Omega_{s,r}\ssm \mu q\Omega_{s}, \EE_s )
\end{equation}
is an isomorphism and that  $T^s(q)$ induces a map  in the opposite direction, so that we get  an endomorphism of $H_{(2)}^\pt(\lambda\Omega_{s,n}\ssm \mu\Omega_{s}, \EE_s )$. This is indeed  the one coming from $\Hl^k(i_s^*j_{s*}\Iscr\Cscr^\pt_{\hat U_s\ssm \{s\}}(\EE))$. It follows  that we get the limit of the Hodge filtration on the stalk of $R^kj_{s*}\Lscr^\pt_{\hat U_s\ssm \{s\},(2)}(\EE_s)$  at $s$, by taking  the limit of  the Hodge filtration on
$\Hl_{(2)}^\pt(\lambda  U_s\ssm \mu U_s, \EE_s )$ under the positive powers of $T^s(q)$.

Lemma \ref{lemma:grassmannlimit} applied to this situation tells us  that if we regard $H_{(2)}^\pt(\lambda  U_s\ssm \mu U_s, \EE_s )_\ell$
as a subquotient of  $H_{(2)}^\pt(\lambda  U_s\ssm \mu U_s, \EE_s)$ (rather than as a direct summand), then its  Hodge filtration reproduces the Hodge filtration of $\IH^\pt(\hat U_s\ssm \{s\}, \EE_s)$. In particular, it defines on 
$H_{(2)}^\pt(\lambda  U_s\ssm \mu U_s, \EE_s )_\ell$ a pure Hodge structure of weight $m+\ell$.
Since  the eigenvalue $q^d$ does not occur, Zucker's weight  computation (formula (8) of the Theorem in \S 3 of \cite{zucker:l2_ii}) allows us to conclude that the restriction map
\[
\oplus_{\ell <d} H_{(2)}^\pt(\lambda U_s\ssm q\lambda U_s, \EE_s )_\ell\to H_{(2)}^\pt(\lambda U_s, \EE_s )
\]
is an isomorphism. Hence 
\[
H_{(2)}^\pt(\lambda  U_s, \EE_s )=\oplus_{\ell <d} H_{(2)}^\pt(\lambda  U_s, \EE_s )\cong 
\oplus_{\ell <d}\IH^\pt(\hat U_s, \EE_s)=\IH^\pt(\hat U_s, \EE_s)
\]
becomes  is an isomorphism of Hodge structures if we let $\lambda\to \infty$.
In the same way we find the corresponding  property for the compactly supported version:
\[
H_{(2),c}^\pt(\lambda  U_s, \EE_s )=\oplus_{\ell >d} H_{(2),c}^\pt(\lambda  U_s, \EE_s )\cong 
\oplus_{\ell >d}\IH_c^\pt(\hat U_s, \EE_s)=\IH_c^\pt(\hat U_s, \EE_s)
\]
This completes the induction step and thereby finishes the proof of Theorem \ref{thm:main}.
\end{proof}


\begin{thebibliography}{99}

\bibitem{amrt}
A.~Ash, D.~Mumford, M.~Rapoport, Y.-S.~Tai:
\textit{Smooth compactifications of locally symmetric varieties}
(2nd ed. with the coll.\ of Peter Scholze). 
Cambridge U.P., Cambridge, 2010, x+230 pp.

\bibitem{bw}
A.~Borel, N.~Wallach:
\textsl{Continuous cohomology, discrete subgroups, and representations of reductive groups,}
Annals of Mathematics Studies \textbf{94}, Princeton 1980.

\bibitem{chen_looij}
J.~Chen, E.~Looijenga:
\textit{The homotopy type of the Baily-Borel and allied compactifications,} Homology Homotopy Appl.\ \textbf{23} (2021), 95--119.

\bibitem{hz}
M.~Harris, S.~Zucker:
\textsl{Boundary cohomology of Shimura varieties. III. Coherent cohomology on higher-rank boundary strata and applications to Hodge theory,}
M\'em.\ Soc.\ Math.\ Fr.\ (N.S.) \textbf{85} (2001), vi+116 pp.

\bibitem{looij:l2}
E.~Looijenga:
\textit{$L^2$-cohomology of locally symmetric varieties,} 
Compositio Math.\ \textbf{67} (1988), 3--20.

\bibitem{lr}
E.~Looijenga, M.~Rapoport:
\textit{Weights in the local cohomology of a Baily-Borel compactification,} in \textsl{Complex geometry and Lie theory.} 223--260, Proc.\ Sympos.\ Pure Math. \textbf{53}, Amer.\ Math.\ Soc., Providence, RI, 1991.

\bibitem{looij:cone}
E.~Looijenga:
\textit{Discrete automorphism groups of convex cones of finite type,}
Compos.\ Math.\ \textbf{150} (2014), 1939--1962. 

\bibitem{ni} 
Ni, Mingyu:
\textit{Weighted Cohomology, Hodge Theory and
Intersection Cohomology of Shimura varieties,} \url{https://arxiv.org/pdf/2603.24464}


\bibitem{saito:hmodules}
M.~Saito:
\textit{Modules de Hodge polarisables,} Publ.\ Res.\ Inst.\ Math.\ Sci.\ \textbf{24} (1988), 849--995.

\bibitem{saito:guide}
M.~Saito: 
\textit{A young person's guide to mixed Hodge modules,} in \textsl{Hodge theory and $L_2$-analysis,} 517--553, 
Adv.\ Lect.\ Math.\ \textbf{39}, Int.\ Press, Somerville, MA, 2017.

\bibitem{ss}
L.~Saper,  M.~ Stern:
\textit{$L_2$-cohomology of arithmetic varieties,} 
Ann.\ of Math.\  \textbf{132} (1990), 1--69.

\bibitem{zucker:l2}
S.~Zucker:
\textit{Hodge theory with degenerating coefficients: $L_2$ cohomology in the Poincar\'e
metric.} Ann.\ of Math.\ \textbf{109} (1979), 415--476. 

\bibitem{zucker:warped}
S.~Zucker:
\textit{$L_2$ cohomology of warped products and arithmetic groups,}
Invent.\ Math.\ \textbf{70} (1982/83), 169--218.

\bibitem{zucker:l2_ii}
S.~Zucker:
\textit{$L_2$-cohomology and intersection homology of
locally symmetric varieties, II,}
Comp.\  Math., \textbf{59}(1986), 339--398.

\end{thebibliography}
 \end{document}